%
\documentclass{amsart}
\usepackage{amssymb,amscd}  

\newtheorem{theorem}{Theorem}[section]
\newtheorem{lemma}[theorem]{Lemma}
\newtheorem{corollary}[theorem]{Corollary}

\newtheorem{proposition}[theorem]{Proposition}

\theoremstyle{definition}
\newtheorem{definition}[theorem]{Definition}

\newtheorem{remark}[theorem]{Remark} 
\numberwithin{equation}{section}


\newcommand\C{\mathbf{C}}

\newcommand\Z{\mathbf{Z}}
\newcommand\F{\mathbf{F}}
\newcommand\Fq{\F_q}

\newcommand\tensor{\otimes}
\newcommand\isomorphic{\cong}

\DeclareMathOperator{\Sym}{Sym}

\DeclareMathOperator{\Divisor}{div}

\newcommand\intersect{\cap}

\newcommand\abs[1]{{\left|#1\right|}}



\newcommand\Projective{{\bf P}} 


\newcommand\Pinf{{P_\infty}} 
\DeclareMathOperator{\Pic}{Pic}
\DeclareMathOperator{\Div}{Div}
\DeclareMathOperator{\Eff}{Eff}
\newcommand{\LL}{\mathcal{L}}
\newcommand{\KK}{\mathcal{K}}
\newcommand{\Jhat}{\widehat{J}}
\newcommand{\OC}{{\mathcal{O}_C}}

\begin{document}

\title{Upper bounds for some Brill-Noether loci over a finite field}

\author{Kamal Khuri-Makdisi}
\address{Mathematics Department, 
American University of Beirut, Bliss Street, Beirut, Lebanon}
\email{kmakdisi@aub.edu.lb}
\subjclass[2000]{14H51, 14G15, 14H25, 14Q05, 11G20}
\thanks{June 20, 2017}

\begin{abstract}
Let $C$ be a smooth projective algebraic curve of genus $g$ over the finite
field $\Fq$.  A classical result of H. Martens states that the
Brill-Noether locus of line bundles $\LL$ in $\Pic^d C$ with $\deg \LL = d$ 
and $h^0(C,\LL) \geq i$ is of dimension at most $d - 2i + 2$, under
conditions that hold when such an $\LL$ is both effective and special.  We
show that the number of such $\LL$ that are rational over $\Fq$ is bounded
above by $K_g q^{d - 2i + 2}$, with an explicit constant $K_g$ that grows
exponentially with $g$.  Our proof uses the Weil estimates for function
fields, and is independent of Martens' theorem.  We apply this
bound to give a precise lower bound of the form $1 - K'_g/q$ for the
probability that a line bundle in $\Pic^{g+1} C(\Fq)$ is base point free.
This gives an effective version over finite fields of the usual statement
that a general line bundle of degree $g+1$ is base point free.  This
is applicable to the author's work on fast Jacobian group arithmetic for
typical divisors on curves.
\end{abstract}

\maketitle

\section{Introduction}
\label{section1}

Let $C$ be a smooth projective algebraic curve of genus $g$.  A classical
object of study is the Brill-Noether locus of $C$ with parameters $(d,i)$.
It is the variety of degree~$d$ line bundles $\LL$ on $C$ whose space of
global sections has dimension at least~$i$:
\begin{equation}
\label{equation1.1}
\{ \LL \in \Pic^d C \mid h^0(\LL) \geq i \}.
\end{equation}
Here $h^0(\LL) = \dim H^0(C,\LL)$.
For ``uninteresting'' values of $(d,i)$, Riemann-Roch and other
considerations such as Clifford's theorem on special divisors imply that
the above set is either empty or equal to all of $\Pic^d C$.  As we shall
recall in Section~\ref{section2}, the ``interesting'' range is when 
\begin{equation}
\label{equation1.2}
0 \leq i-1 \leq d-i+1 \leq g-1.
\end{equation}
When~\eqref{equation1.2} holds, one has upper and lower bounds on
the dimension of the Brill-Noether locus; see for example Chapters IV and~V
of~\cite{ACGH}.  The main aspect that we will consider in this article is
the upper bound on the dimension.  In this regard, there is a basic theorem
of Martens~\cite{Martens}; see Theorem~IV.5.1 of~\cite{ACGH}.  It says that
the dimension of the Brill-Noether locus in the interesting range is
bounded above by $d-2i+2$.  In fact, Martens' theorem is somewhat sharper,
as it says that the dimension $d-2i+2$ is attained if and only if the curve
$C$ is hyperelliptic, so the upper bound for a nonhyperelliptic curve is in
fact $d-2i+1$.  Sharper bounds are known if one excludes not only
hyperelliptic curves, but also other special cases; see for example
Mumford's result in Theorem~IV.5.2 of~\cite{ACGH}.  However, our main
interest here is in a bound that applies uniformly for all genus $g$
curves over a finite field.

The main result of this note is a quantitative version of the dimension 
bound $d-2i+2$ over a finite field $\Fq$.  We obtain the result,
Theorem~\ref{theorem2.8} below, that the number of $\Fq$-rational
points in the Brill-Noether locus~\eqref{equation1.1} is bounded above by
$K q^{d-2i+2}$, where $K=16^g$ is a constant that depends only on the genus
$g$.  Applying this result to the $\F_{q^a}$-rational points as $a \to
\infty$, one recovers Martens' result, at least when the base field is
finite, because a dimension $e$ variety over $\Fq$ has 
$\Theta(q^{ae})$ points over $\F_{q^a}$.
Our quantitative result also incidentally implies an alternate
proof of Clifford's theorem over a finite field.  Our main tool in the
proof is the Weil bounds for both zeta and L-functions of the curve $C$.
We use the Weil bounds in Theorem~\ref{theorem2.6} to bound a larger set
$X_d$, defined below in~\eqref{equation2.5}, that is related to the various
Brill-Noether loci with fixed $d$ and varying $i$.  Our results do not seem
to yield lower bounds; it would be interesting to see if any of the lower
bounds on the dimensions on Brill-Noether loci can be proved by counting
points over finite fields in some way.

We apply our result on upper bounds for Brill-Noether loci to
obtain a precise estimate of the probability that a random element
$\LL \in \Pic^{g+1} C$ is base point free; here the random element is
drawn uniformly from $(\Pic^{g+1} C)(\Fq)$.  We similarly (and more easily)
obtain a precise estimate of the probability that a random element 
$\LL \in (\Pic^{g-1} C)(\Fq)$ has $h^0(\LL) = 0$.  For both of these
questions, it is easy to see that the dimension of the subvarieties of
$\Pic^{g \pm 1} C$ where the desired condition does \emph{not} hold is
at most $g-1$, from which it follows that the probabilities are at
least $1 - A/q$ for a suitable constant $A$.  However, finding the
precise $A$ from just the dimension count and some kind of estimate of
the degree seems elusive.  We can moreover use the more precise 
estimates here to obtain a bound for the probability that a divisor
is ``typical'' for fast Jacobian group arithmetic, in the sense of our
preprint~\cite{KKMtypical}.  Obtaining that bound was the main
motivation for this article.

\section{The main result}
\label{section2}

We first review why~\eqref{equation1.2} is the range in which pairs
$(d,i)$ can potentially give an interesting Brill-Noether
locus~\eqref{equation1.1}.  Let $\KK$ denote the canonical bundle on
$C$, so that $h^1(\LL) = \dim H^1(C, \LL) = h^0(\KK \tensor
\LL^{-1})$.  Then, by Riemann-Roch, $h^0(\LL) = h^1(\LL) + d + 1 - g$,
whenever $\deg \LL = d$.  Since we always have $h^0(\LL), h^1(\LL)
\geq 0$, it follows that if either $i \leq 0$ or $i \leq d+1-g$, the
Brill-Noether locus of~\eqref{equation1.1} is all of $\Pic^d C$, hence
uninteresting.  So the interesting case can only occur when both
$i \geq 1$ and $i \geq d+2-g$ are satisfied; these are precisely the
outer inequalities of~\eqref{equation1.2}.  Moreover, when these
outer inequalities hold, we have both $h^0(\LL) \geq 1$ and
$h^1(\LL) \geq 1$, so the line bundle $\LL$ corresponds to an
effective special divisor; in this setting, Clifford's theorem
(Theorem IV.5.2 of~\cite{Hartshorne}) says that $i-1 \leq d/2$, which
gives the middle inequality of~\eqref{equation1.2}.  Actually,
our reasoning does not need the inequality arising from Clifford's
theorem.  The purist reader may prefer not to assume this
inequality, but may rather deduce it from Theorem~\ref{theorem2.8},
which gives an alternate proof of Clifford's theorem over a finite
field; see Remark~\ref{remark2.9.5}.

Our second preliminary observation is that we can restrict $d$ to the
interval $0 \leq d \leq g-1$ for the purpose of bounding the size of
the interesting Brill-Noether loci.  First, elementary considerations
about degrees of $\LL$ and $\KK \tensor \LL^{-1}$ tell us that
$h^0(\LL)$ is predicted entirely by $d$, hence uninteresting, except
in the range $0 \leq d \leq 2g-2$.  Furthermore, we have a bijection
between $\Pic^d C$ and $\Pic^{2g-2-d} C$ sending $\LL$ to $\KK \tensor
\LL^{-1}$, and this bijection (which is an involution on the set of
all line bundles on $C$) replaces the Brill-Noether locus for the
pair $(d,i)$ with the locus for the pair 
$(2g-2-d, i-d-1+g)$, while leaving the quantity $d-2i+2$ unchanged.
This allows us to reduce the interval for $d$ to half its size.  This
is indeed the assumption made in Theorem~\ref{theorem2.6} and 
case~(i) of Theorem~\ref{theorem2.8} below, namely:
\begin{equation}
\label{equation2.0}
0 \leq d \leq g-1, \qquad i \geq 1.
\end{equation}
Note than when~\eqref{equation2.0} holds, then $d-i+1 \leq g-1$, so
the outer inequalities of~\eqref{equation1.2} are both satisfied.

Finally, we include an elementary lemma on the growth of certain
functions of $x$, which we will apply for $x$ a simple expression in
$q$ or $g$ as needed in our theorems below.

\begin{lemma}
\label{lemma2.0}
\begin{enumerate}
\item
The functions $x/(x-1)^2$ and $(1+x^{-1})^2/(1-x^{-1})$ are both
decreasing for $x>1$.
\item
The function $(1-2^{-x})^x$ is increasing for $x > 1/\log 2
\approx 1.443$.
\item
The function $\bigl[(1+2^{-x})/(1-2^{-x})\bigr]^x$ is decreasing
for $x > 1/\log 2$.
\end{enumerate}
\end{lemma}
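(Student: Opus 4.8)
The plan is to treat all three items as elementary one-variable calculus facts, proving each by showing the relevant derivative has a constant sign on the stated range. For the quotients in part~(1) I will differentiate directly, while for the exponential expressions in parts~(2) and~(3) I will take logarithms and then exploit the power series of $\log(1\pm 2^{-x})$ in the variable $2^{-x}$. The observation that organizes the whole argument is that the threshold $x > 1/\log 2$ is exactly the condition $x\log 2 > 1$, equivalently $2^{-x} < 1/e$; this is precisely what will force each term of the differentiated series to carry a definite sign.

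For part~(1), first I would simplify $(1+x^{-1})^2/(1-x^{-1}) = (x+1)^2/\bigl(x(x-1)\bigr)$, so that both functions are rational. Differentiating $x/(x-1)^2$ gives numerator $-(x+1)$ over $(x-1)^3$, which is negative for $x>1$; differentiating $(x+1)^2/(x^2-x)$ gives numerator $(x+1)(1-3x)$, again negative for $x>1$. Both functions are therefore decreasing, and this part involves no subtlety beyond the quotient rule.

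For parts~(2) and~(3), I would set $L(x) = x\log(1-2^{-x})$ and $R(x) = x\bigl(\log(1+2^{-x}) - \log(1-2^{-x})\bigr)$, so that the two functions are $e^{L(x)}$ and $e^{R(x)}$ and it suffices to sign $L'$ and $R'$. Using $\log(1-t) = -\sum_{k\ge 1} t^k/k$ with $t = 2^{-x}$ gives $L(x) = -\sum_{k\ge 1} x\,2^{-kx}/k$, and differentiating term by term yields
\begin{equation*}
L'(x) = \sum_{k\ge 1} 2^{-kx}\Bigl(x\log 2 - \tfrac1k\Bigr).
\end{equation*}
When $x\log 2 > 1$ each factor $x\log 2 - 1/k \ge x\log 2 - 1 > 0$, so $L'(x) > 0$ and the function increases. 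Similarly, from $\log\frac{1+t}{1-t} = 2\sum_{m\text{ odd}} t^m/m$ I get $R(x) = 2\sum_{m\text{ odd}} x\,2^{-mx}/m$ and
\begin{equation*}
R'(x) = 2\sum_{m\text{ odd}} 2^{-mx}\Bigl(\tfrac1m - x\log 2\Bigr),
\end{equation*}
where now every factor $1/m - x\log 2 \le 1 - x\log 2 < 0$ once $x\log 2 > 1$, so $R'(x) < 0$ and the function decreases.

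The point to verify carefully, rather than a genuine obstacle, is the legitimacy of differentiating these series term by term: since $2^{-x} < 1$, the series and their formal derivatives converge uniformly on compact subsets of $\{x > 0\}$, which justifies the interchange. The only real insight needed is to resist expanding the derivatives of $\log(1\pm 2^{-x})$ directly---which produces messy, non-sign-definite expressions requiring endpoint estimates---and instead to expand \emph{first}, after which the condition $x > 1/\log 2$ makes the sign of every summand transparent.
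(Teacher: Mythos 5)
Your proposal is correct and follows essentially the same route as the paper: both expand the logarithms as $\mp\sum_n x\,2^{-nx}/n$ and sign each term via the derivative of $x e^{-ax}$ (your factor $x\log 2 - 1/k > 0$ is exactly the paper's observation that $x2^{-nx}$ is decreasing for $x > 1/\log 2$), while part~(1) is the direct calculus check the paper dismisses as trivial. Your explicit justification of term-by-term differentiation is a welcome bit of care that the paper leaves implicit.
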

\begin{proof}
Part~(1) is trivial.  Part~(2) follows from noting that the
logarithm of the function in question is 
$x\log(1-2^{-x}) = -\sum_{n \geq 1} x2^{-nx}/n$, and that the
functions $x2^{-nx}$ are all decreasing for $x > 1/\log 2$ (consider
the derivative of $x\exp(-ax)$).
Part~(3) is similar, using 
$x[\log(1+2^{-x}) - \log(1-2^{-x})] = 2 \sum_{n \text{ odd}} x2^{-nx}/n$.
\end{proof}

We can now get down to the business of our main result.  As our proof
relies on the Weil bounds for the zeta function and abelian
L-functions of $C$, we begin by fixing some more notation and
recalling the statements that we need.  For a survey of these results,
see Chapter~9 and the appendix of~\cite{Rosen}.

\begin{definition}
\label{definition2.1}
We define $\Div^d C$ to be the set of $\Fq$-rational divisors of
degree $d$ on $C$, and we define $\Eff^d C$ to be the subset of
effective divisors.  We write $J = (\Pic^0 C)(\Fq) = (\Div^0 C)/\sim$,
where $D \sim E$ means that $D$ and $E$ are linearly equivalent; we
write the equivalence class of $D$ as $[D]$.
\end{definition}

Since there is no period-index obstruction over finite fields, the
natural map $\Div^d C \to (\Pic^d C)(\Fq)$ is surjective.  Moreover, 
$\Eff^d C \to (\Pic^d C)(\Fq)$ is surjective if $d \geq g$.  We also
choose once and for all an $\Fq$-rational divisor $D_0$ of degree~$1$
(this is possible over a finite field) and use it to identify
$(\Pic^d C)(\Fq)$ with $J$ via $[D] \mapsto [D - d D_0]$.  We consider
the group $\Jhat$ of characters $\chi: J \to \C^*$.  Due to our
identification, we can evaluate $\chi$ on elements of 
$(\Pic^d C)(\Fq)$, or, for that matter, on $\Div^d C$.

\begin{definition}
\label{definition2.2}
We define $N_d = \abs{\Eff^d C}$, and for $\chi \in \Jhat$ we define
$N_{d,\chi} = \sum_{D \in \Eff^d C} \chi(D)$.  Thus $N_d = N_{d,1}$
for the trivial character $\chi = 1$.
\end{definition}

Note in the above definition that $N_{d,\chi}$ depends on the specific
choice of $D_0$, but that the effect of changing $D_0$ to $D'_0$ is to
multiply $N_{d,\chi}$ by the root of unity $\chi([D_0 - D'_0])^d$.  A
similar statement holds for the quantities $\alpha_{i,\chi}$ below.
Our main concern is bounds for the absolute value $\abs{N_{d,\chi}}$,
which is not affected by the choice of $D_0$.

\begin{theorem}[A. Weil]
\label{theorem2.3}
For $\chi \in \Jhat$, define the L-function $L_C(T,\chi)$ and, for the
trivial character, the zeta-function $Z_C(T)$ by
\begin{equation}
\label{equation2.1}
L_C(T,\chi) = \sum_{d \geq 0} N_{d,\chi} T^d,
\qquad
Z_C(T) = L_C(T,1) = \sum_{d \geq 0} N_d T^d.
\end{equation}
Then $L_C(T,\chi)$ is a polynomial for $\chi \neq 1$, while $Z_C(T)$
is a rational function.  These have the form
\begin{equation}
\label{equation2.2}
Z_C(T) = \frac{\prod_{i=1}^{2g}(1-\alpha_i T)}{(1-T)(1-qT)},
\qquad
L_C(T,\chi) = \prod_{i=1}^{2g-2} (1 - \alpha_{i,\chi}T),
\text{ for } \chi \neq 1,
\end{equation}
with all the $\abs{\alpha_i}$ and $\abs{\alpha_{i,\chi}}$ equal to
$q^{1/2}$.  Moreover, we have
\begin{equation}
\label{equation2.3}
\abs{J} = \prod_{i=1}^{2g} (1 - \alpha_i), 
\quad\text{ hence } (q^{1/2} - 1)^{2g} \leq \abs{J} \leq (q^{1/2} + 1)^{2g},
\end{equation}
\begin{equation}
\label{equation2.4}
\abs{C(\F_{q^d})} = q^d + 1 - \sum_{i=1}^{2g} \alpha_i^d.  
\end{equation}
\end{theorem}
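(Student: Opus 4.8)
The plan is to split the statement into a formal part---rationality, the product shapes in \eqref{equation2.2}, the functional equation, the class-number formula \eqref{equation2.3}, and the point-count formula \eqref{equation2.4}---which all follow from Riemann--Roch and orthogonality of characters, and one genuinely deep part, the Riemann Hypothesis $\abs{\alpha_i} = \abs{\alpha_{i,\chi}} = q^{1/2}$, which is the true content of Weil's theorem. For the formal part I would start from the Euler product $L_C(T,\chi) = \prod_x (1 - \chi(x)T^{\deg x})^{-1}$, the product running over closed points $x$ of $C$; this merely encodes unique factorization of an effective $\Fq$-rational divisor into closed points, and expanding it reproduces the generating series \eqref{equation2.1}. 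Grouping effective divisors by linear equivalence class, and using that the effective divisors in a class $[D]$ are parametrized by the $\Fq$-points of $\Projective(H^0(C,\LL))$ for $\LL$ the line bundle of class $[D]$, gives
\begin{equation*}
N_{d,\chi} = \sum_{[D] \in (\Pic^d C)(\Fq)} \chi([D]) \cdot \frac{q^{h^0(D)} - 1}{q - 1}.
\end{equation*}

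For $d \geq 2g-1$ Riemann--Roch forces $h^0(D) = d+1-g$ for every class, so the inner ratio is constant and the sum reduces to $\sum_{[D]} \chi([D])$. Using the chosen $D_0$ to identify $(\Pic^d C)(\Fq)$ with $J$, this is $\sum_{j \in J}\chi(j)$, which equals $\abs{J}$ for $\chi = 1$ and vanishes for $\chi \neq 1$ by orthogonality. Hence $N_{d,\chi} = 0$ for all $d \geq 2g-1$ when $\chi \neq 1$, so $L_C(T,\chi)$ is a polynomial of degree at most $2g-2$; while for $\chi = 1$ the tail $\sum_{d \geq 2g-1} N_d T^d$ is a combination of two geometric series, producing exactly the simple poles at $T=1$ and $T=1/q$ and hence the denominator $(1-T)(1-qT)$. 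The involution $\LL \mapsto \KK \tensor \LL^{-1}$ combined with Riemann--Roch then yields the functional equation $Z_C(T) = q^{g-1}T^{2g-2}Z_C(1/(qT))$, which pins the numerator to a degree-$2g$ polynomial $\prod_{i=1}^{2g}(1-\alpha_iT)$ whose roots are stable under $\alpha \mapsto q/\alpha$. Comparing the power-series logarithm of \eqref{equation2.2} with $\log Z_C(T) = \sum_{d \geq 1} \abs{C(\F_{q^d})}\,T^d/d$ (again a restatement of the Euler product) gives \eqref{equation2.4}. Finally \eqref{equation2.3} is the class-number formula: since $\abs{J}$ counts degree-$0$ classes, evaluating the numerator at $T=1$ gives $\abs{J} = \prod_{i=1}^{2g}(1-\alpha_i)$, and once $\abs{\alpha_i}=q^{1/2}$ is known each factor satisfies $q^{1/2}-1 \leq \abs{1-\alpha_i} \leq q^{1/2}+1$ (the lower bound using $q \geq 2$), so multiplying the $2g$ factors yields the displayed estimate.

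The main obstacle, and the only genuinely deep ingredient, is the Riemann Hypothesis $\abs{\alpha_i} = q^{1/2}$; everything above is bookkeeping. I would obtain it by realizing the $\alpha_i$ as the eigenvalues of the $q$-power Frobenius acting on the Jacobian of $C$ (equivalently on $H^1$), and invoking the positivity of the Rosati involution attached to the canonical polarization, which forces every such eigenvalue to have absolute value $q^{1/2}$. The essentially equivalent route is Weil's intersection theory on the surface $C \times C$: one bounds $\abs{\sum_i \alpha_i^d} = \abs{q^d + 1 - \abs{C(\F_{q^d})}}$ through the Hodge index (Castelnuovo--Severi) inequality applied to the diagonal and the graph of the $d$-th power of Frobenius, obtaining $\abs{\alpha_i} \leq q^{1/2}$, and then upgrades this to equality using the pairing $\alpha_i \leftrightarrow q/\alpha_i$ furnished by the functional equation. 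For the twisted case $\chi \neq 1$, the character $\chi$ factors through a finite quotient of $J$ and hence corresponds to an unramified abelian cover $C' \to C$; the $\alpha_{i,\chi}$ then appear among the Frobenius eigenvalues of $C'$, so the curve case already delivers $\abs{\alpha_{i,\chi}} = q^{1/2}$. This is exactly the input I would import from Weil, in line with the references to Chapter~9 and the appendix of~\cite{Rosen}.
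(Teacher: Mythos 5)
The paper does not actually prove Theorem~\ref{theorem2.3}: it quotes it as a classical result of Weil and points to Chapter~9 and the appendix of \cite{Rosen} for a survey, so there is no internal proof to compare your attempt against. That said, your sketch is essentially the standard argument found in the cited reference, and it is correct in outline: the Euler product, the count $N_{d,\chi}=\sum_{[D]}\chi([D])\,(q^{h^0(D)}-1)/(q-1)$ over classes, the vanishing of $N_{d,\chi}$ for $d\geq 2g-1$ and $\chi\neq 1$ by orthogonality of characters, the geometric-series tail producing the denominator $(1-T)(1-qT)$, the functional equation via the involution $\LL\mapsto\KK\tensor\LL^{-1}$, the identities $P(1)=\abs{J}$ and $\log Z_C(T)=\sum_{d\geq 1}\abs{C(\F_{q^d})}\,T^d/d$, and the reduction of the twisted Riemann Hypothesis to the untwisted one via the unramified abelian cover attached to $\chi$ by class field theory. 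On that last point your normalization is exactly right: since $\chi(D_0)=\chi([D_0-D_0])=1$, the extended character has finite order and is trivial on a degree-one class, so the corresponding cover $C'$ is geometric (constant field still $\Fq$) and RH for $C'$ applies to the $\alpha_{i,\chi}$. You also correctly isolate the one genuinely deep input, $\abs{\alpha_i}=q^{1/2}$, and import it from Weil (Rosati positivity, or Castelnuovo--Severi on $C\times C$ upgraded to equality by the $\alpha\mapsto q/\alpha$ pairing); given that the paper itself imports the entire theorem, this is the appropriate level of rigor, though it makes your write-up a reduction to Weil's RH rather than a self-contained proof.

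One small loose end worth fixing: your orthogonality argument only shows $\deg L_C(T,\chi)\leq 2g-2$, whereas \eqref{equation2.2} asserts a product of exactly $2g-2$ factors, each with $\abs{\alpha_{i,\chi}}=q^{1/2}$. To pin down the degree you need either the functional equation for the twisted L-functions (proved by the same Riemann--Roch duality manipulation you used for $Z_C$), or a degree count in the factorization $Z_{C'}(T)=\prod_{\psi}L_C(T,\psi)$ over characters of the Galois group: Riemann--Hurwitz for the unramified cover gives $2g'-2=n(2g-2)$, so the numerator degrees satisfy $\sum_{\psi\neq 1}\deg L_C(T,\psi)=(n-1)(2g-2)$, which together with your upper bound forces every nontrivial factor to have degree exactly $2g-2$.
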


We deduce the following bounds:
\begin{corollary}
\label{corollary2.4}
\begin{enumerate}
\item
For $\chi \neq 1$, we have $N_{d,\chi} = 0$ unless 
$0 \leq d \leq 2g-2$, in which case we have
$\abs{N_{d,\chi}} \leq \binom{2g-2}{d} q^{d/2} \leq 4^{g-1} q^{d/2}$.
\item
For $\chi = 1$, we have
$0 \leq N_d \leq (\frac{q}{q-1})(1 + q^{-1/2})^{2g} q^d$.
\item
The number $N_d^{irr}$ of irreducible (i.e., prime) effective divisors
of degree $d$ satisfies $N_d^{irr} \leq (q^d + 2gq^{d/2} + 1)/d$.
\end{enumerate}
\end{corollary}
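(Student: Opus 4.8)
All three bounds follow by extracting the appropriate coefficient from the generating functions of Theorem~\ref{theorem2.3} and estimating it using $|\alpha_i| = |\alpha_{i,\chi}| = q^{1/2}$; the three parts are independent, so I would handle them in turn. \emph{Part (1).} Here the plan is simply to compare the two expressions for $L_C(T,\chi)$. Expanding $\prod_{i=1}^{2g-2}(1-\alpha_{i,\chi}T)$ and matching with $\sum_d N_{d,\chi}T^d$ shows that $N_{d,\chi}$ is, up to sign, the $d$-th elementary symmetric function of $\alpha_{1,\chi},\dots,\alpha_{2g-2,\chi}$; in particular $N_{d,\chi}=0$ for $d<0$ and for $d>2g-2$, since the polynomial has degree $2g-2$. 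For $0 \le d \le 2g-2$ I would bound this symmetric function by the sum of the absolute values of its $\binom{2g-2}{d}$ monomials, each of absolute value $(q^{1/2})^d$, giving $|N_{d,\chi}| \le \binom{2g-2}{d}q^{d/2}$; the final inequality then follows from $\binom{2g-2}{d} \le 2^{2g-2} = 4^{g-1}$.

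\emph{Part (2).} The lower bound $N_d \ge 0$ is immediate, since $N_d = \abs{\Eff^d C}$ counts a finite set. For the upper bound I would write the numerator of $Z_C(T)$ as $P(T) = \prod_{i=1}^{2g}(1-\alpha_i T) = \sum_{k=0}^{2g} p_k T^k$, with $|p_k| \le \binom{2g}{k}q^{k/2}$ exactly as in Part~(1), and note that the coefficient of $T^m$ in the remaining factor $1/[(1-T)(1-qT)]$ is $(q^{m+1}-1)/(q-1)$ (by partial fractions, or just $\sum_{j=0}^m q^j$). Convolving these two expansions and bounding each summand in absolute value, which is legitimate because $N_d \ge 0$, gives
\begin{align*}
N_d = \sum_{k=0}^{\min(d,2g)} p_k\,\frac{q^{d-k+1}-1}{q-1}
&\le \frac{q^{d+1}}{q-1}\sum_{k=0}^{2g}\binom{2g}{k}q^{-k/2} \\
&= \frac{q}{q-1}\bigl(1+q^{-1/2}\bigr)^{2g}\,q^d,
\end{align*}
where I use $(q^{d-k+1}-1)/(q-1) \le q^{d-k+1}/(q-1)$ to drop the $-1$ and then extend the (nonnegative) sum up to $k=2g$ so that it closes into the binomial $(1+q^{-1/2})^{2g}$.

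\emph{Part (3).} A prime effective divisor of degree $d$ is precisely a closed point of $C$ of degree $d$, i.e.\ a Galois orbit of size $d$ among the $\overline{\Fq}$-points. Each such point contributes exactly $d$ distinct points to $C(\F_{q^d})$, namely its conjugates, which all become rational over $\F_{q^d}$; and the orbits attached to distinct closed points are disjoint. Hence $d\,N_d^{irr} \le \abs{C(\F_{q^d})}$. I would then bound the right-hand side with~\eqref{equation2.4}: since $|\alpha_i| = q^{1/2}$ we have $\bigl|\sum_{i=1}^{2g}\alpha_i^d\bigr| \le 2g\,q^{d/2}$, so $\abs{C(\F_{q^d})} \le q^d + 2g\,q^{d/2} + 1$, and dividing by $d$ yields the claim.

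The only genuinely delicate point is Part~(2): one must ensure the bound is uniform in $d$, including the small values $d < 2g$ where the convolution is truncated at $k=d$ rather than $k=2g$. Padding the sum with the extra nonnegative terms up to $2g$ is exactly what repairs this, at no cost, and is also what lets the binomial sum close up. I would double-check that dropping the $-1$ in each numerator only increases the terms (it does) and that no sign issues intervene, which they do not, since every bound on $|p_k|$ and every coefficient $(q^{m+1}-1)/(q-1)$ is manifestly handled in absolute value.
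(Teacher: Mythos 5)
Your proposal is correct and follows essentially the same route as the paper's own proof: coefficient extraction from $L_C(T,\chi)$ with the triangle inequality for part~(1), the same convolution of the numerator coefficients of $Z_C(T)$ with $(q^{j+1}-1)/(q-1)$ padded to the full binomial sum $(1+q^{-1/2})^{2g}$ for part~(2), and the identification of degree-$d$ prime divisors with size-$d$ Galois orbits in $C(\F_{q^d})$ combined with the Weil bound~\eqref{equation2.4} for part~(3). The extra care you take in part~(2) about truncation at $k=\min(d,2g)$ is exactly the point the paper glosses with its parenthetical ``this works even if $d > 2g$.''
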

\begin{proof}
We first prove statement (1).
Since $N_{d,\chi}$ is the coefficient of $T^d$ in $L_C(T,\chi)$, one
immediately obtains that $N_{d,\chi}$ is a sum of $\binom{2g-2}{d}$
terms, each of absolute value $q^{d/2}$.  Moreover, $\binom{2g-2}{d}
\leq (1+1)^{2g-2} = 4^{g-1}$.  (One can reduce this estimate by a factor
of roughly $g^{1/2}$, but our main concern is the exponential
dependence on $g$.) 
We now show statement (2).  The coefficient of $T^j$ in
$1/(1-T)(1-qT)$ is the positive number $(q^{j+1} - 1)/(q-1)$, which is
bounded above by $(\frac{q}{q-1})q^j$; rewriting $j = d-k$, the bound
becomes $(\frac{q}{q-1})q^{d-k}$.
In the other factor of $Z_C(T)$, which is $\prod_{i=1}^{2g}(1-\alpha_i
T)$, the coefficient of $T^k$ is bounded by $\binom{2g}{k} q^{k/2}$.
Taking the product with $1/(1-T)(1-qT)$, we obtain that $N_d$, being the
coefficient of $T^d$ in $Z_C(T)$, is bounded by
$N_d \leq (\frac{q}{q-1})\sum_{k=0}^d \binom{2g}{k} q^{d-(k/2)}$,
which easily gives the  bound in (2) (this works even if $d > 2g$).
Finally, statement (3) holds because each irreducible effective
divisor of degree $d$ can be regarded as a sum of $d$ distinct points
of $C(\F_{q^d})$, so we can apply~\eqref{equation2.4}.
\end{proof}

\begin{definition}
\label{definition2.5}
Fix $d \geq 0$.  We define the set 
$X_d \subset \Eff^d C \times \Eff^d C$
by
\begin{equation}
\label{equation2.5}
X_d = \{(D,E) \in \Eff^d C \times \Eff^d C \mid D \sim E \}.
\end{equation}
One can view $X_d$ as the set of $\Fq$-rational points of a subvariety of
$\Sym^d C \times \Sym^d C$.
\end{definition}

The set $X_d$ is closely related to the set of rational functions of
degree~$d$ on~$C$, as studied in~\cite{Elkies}, and our method of
bounding $\abs{X_d(\Fq)}$ in~Theorem~\ref{theorem2.6} below, via a sum
over the characters $\chi \in \Jhat$, was inspired by that article.

\begin{theorem}
\label{theorem2.6}
Assume that $0 \leq d \leq g-1$ and $q \geq 5$.  The number of points
of $X_d$ satisfies the inequality
\begin{equation}
\label{equation2.6}
\abs{X_d} \leq 16^g q^d.  
\end{equation}
\end{theorem}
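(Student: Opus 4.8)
The plan is to convert the counting of $X_d$ into a character sum over the finite abelian group $J$, exploiting orthogonality of characters. For $(D,E) \in \Eff^d C \times \Eff^d C$, the condition $D \sim E$ is equivalent to $[D - E] = 0$ in $J$, and the orthogonality relation gives $\frac{1}{\abs{J}}\sum_{\chi \in \Jhat} \chi([D-E]) = 1$ when $D \sim E$ and $0$ otherwise. Summing over all pairs $(D,E)$ and interchanging the order of summation, the inner double sum factors, since under our identification $\chi([D-E]) = \chi(D)\overline{\chi(E)}$. This yields the exact identity
\begin{equation*}
\abs{X_d} = \frac{1}{\abs{J}} \sum_{\chi \in \Jhat} \abs{N_{d,\chi}}^2,
\end{equation*}
which is the heart of the argument: it reduces the geometric counting problem entirely to the analytic bounds of Corollary~\ref{corollary2.4}. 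This is the step inspired by~\cite{Elkies}.

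Next I would split off the trivial character. For $\chi \neq 1$, Corollary~\ref{corollary2.4}(1) gives $\abs{N_{d,\chi}}^2 \leq 16^{g-1} q^d$, and since there are fewer than $\abs{J}$ nontrivial characters, their total contribution to $\frac{1}{\abs{J}}\sum_\chi \abs{N_{d,\chi}}^2$ is at most $16^{g-1} q^d$, comfortably within the target budget. The trivial character contributes $N_d^2/\abs{J}$, which is the delicate term. For it I would combine the upper bound $N_d \leq (\frac{q}{q-1})(1+q^{-1/2})^{2g} q^d$ from Corollary~\ref{corollary2.4}(2) with the lower bound $\abs{J} \geq (q^{1/2}-1)^{2g}$ from~\eqref{equation2.3}. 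Writing $x = q^{1/2}$, the genus-dependent factor reorganizes as $x^{-2g}\bigl[(1+x^{-1})^2/(1-x^{-1})\bigr]^{2g}$, and here Lemma~\ref{lemma2.0}(1) lets me replace the bracketed quantity by its value at the smallest admissible $x = \sqrt{5}$, making the estimate uniform in $q \geq 5$. The crucial cancellation is supplied by the hypothesis $d \leq g-1$: the factor $q^{2d}$ coming from $N_d^2$ combines with $x^{-2g} = q^{-g}$ to produce $q^{2d-g} \leq q^{d-1}$, so the trivial term is bounded by a constant times $C^g q^d$ with $C < 16$ (numerically $C \approx 14.3$).

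The main obstacle is the bookkeeping of the trivial-character term rather than the character-sum identity itself. Controlling $N_d^2/\abs{J}$ requires simultaneously using (a) the degree restriction $d \leq g-1$ to absorb the $q^{2d}$ growth against the size of $J$, (b) the monotonicity of Lemma~\ref{lemma2.0}(1) to make the bound uniform over all $q \geq 5$, and (c) the condition $q \geq 5$ to control $q/(q-1)$ and $q^{-1}$ by absolute constants. The final step is to verify that the sum of the two contributions, namely roughly $0.3 \cdot 14.3^g q^d$ from the trivial character together with $16^{g-1} q^d$ from the rest, does not exceed $16^g q^d$; dividing through by $16^g q^d$ and noting that $(14.3/16)^g \leq 1$, this collapses to a clean numerical inequality of the form $0.3 + 1/16 \leq 1$, which holds with room to spare. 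The slack here reflects that $16^g$ is a deliberately generous constant chosen for a clean statement.
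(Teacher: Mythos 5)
Your proposal is correct and takes essentially the same route as the paper: the identical character-orthogonality identity $\abs{X_d} = N_d^2/\abs{J} + \abs{J}^{-1}\sum_{\chi\neq 1}\abs{N_{d,\chi}}^2$, the same bounds from Corollary~\ref{corollary2.4} and~\eqref{equation2.3} for the two pieces, and the same appeal to Lemma~\ref{lemma2.0}(1) with $q \geq 5$ together with $d \leq g-1$ to absorb $q^{2d-g}$ into $q^d$. The only deviation is cosmetic bookkeeping in the trivial-character term, where you retain the sharper base $\approx 14.36^g$ while the paper rounds the bracketed quantity up to $4$ to get $(5/16)\,16^g q^d$ directly.
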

\begin{remark}
\label{remark2.7}
The argument below is still valid for $q \leq 4$, but it yields an
order of growth of $\abs{X_d}$ that is bounded by $C^g q^d$, for $C$ a
constant larger than~$16$.  This is still enough to see that for all
$q$, the variety whose set of $\Fq$-rational points is $X_d$ has
dimension at most $d$.
\end{remark}
\begin{proof}
We can write $\abs{X_d}$ in terms of a sum over the
characters $\chi$ of $J$:
\begin{equation}
\label{equation2.7}
\begin{split}
\abs{X_d} &= \sum_{D, E \in \Eff^d C} 
\frac{1}{\abs{J}} \sum_{\chi \in \Jhat} 
   \chi(D) \, \chi^{-1}(E)\\
&= \frac{1}{\abs{J}} \sum_{\chi \in \Jhat} \quad 
  \sum_{D, E \in \Eff^d C} \chi(D) \, \chi^{-1}(E)
 = \frac{N_d^2}{\abs{J}}
     +  \frac{1}{\abs{J}}
        \sum_{1 \neq \chi \in \Jhat} N_{d,\chi} N_{d,\chi^{-1}}.\\
\end{split}
\end{equation}
The second term is bounded by $16^{g-1} q^d$, by
part~(1) of Corollary~\ref{corollary2.4},
and by the fact that $\abs{J} = \abs{\Jhat}$. 
The first term is bounded by 
$(\frac{q}{q-1})^2\bigl[(1 + q^{-1/2})^2/(1 - q^{-1/2})\bigr]^{2g} q^{2d-g}$,
by~\eqref{equation2.3} and part~(2) of Corollary~\ref{corollary2.4}.
Now, as $q$ increases, the quantities $q/(q-1)^2$ and 
$(1 + q^{-1/2})^2/(1 - q^{-1/2})$ 
both decrease, by part~(1) of Lemma~\ref{lemma2.0},
so they are bounded by $5/16$ and
$(1+1/\sqrt{5})^2/(1-1/\sqrt{5}) < 4$, respectively.  Thus the first term is
at most $[(5/16)4^{2g} q^{d-g+1}] \cdot q^d$, which in turn is bounded
by $(5/16) 16^g q^d$, because $d \leq g-1$.  Combining the two terms
completes the proof.
\end{proof}

We are now ready for the main result of this article.

\begin{theorem}
\label{theorem2.8}
Assume that $q \geq 5$, and that either (i) $0 \leq d \leq g-1$ and
$i \geq 1$, or (ii) $g-1 \leq d \leq 2g-2$ and $i-d-1+g \geq 1$.
Then the cardinality of the Brill-Noether locus~\eqref{equation1.1}
is bounded by
\begin{equation}
\label{equation2.8}
\abs{\{\LL \in \Pic^d C(\Fq) \mid h^0(\LL) \geq i\}} \leq 16^g q^{d-2i+2}.
\end{equation}
\end{theorem}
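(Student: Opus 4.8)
The plan is to reduce Theorem~\ref{theorem2.8} to the already-established bound on $\abs{X_d}$ in Theorem~\ref{theorem2.6}. By the reduction discussed after~\eqref{equation2.0}, the involution $\LL \mapsto \KK \tensor \LL^{-1}$ interchanges case~(ii) with case~(i) while preserving both the cardinality of the Brill-Noether locus and the quantity $d-2i+2$, so it suffices to treat case~(i), where $0 \leq d \leq g-1$ and $i \geq 1$. The key observation is that if $h^0(\LL) \geq i$, then the complete linear system $\abs{\LL}$ is a projective space of dimension at least $i-1$, hence it contains many effective divisors. I want to count pairs $(D,E)$ of linearly equivalent effective divisors lying in such linear systems, and compare against the bound $\abs{X_d} \leq 16^g q^d$.

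First I would fix a line bundle $\LL$ with $h^0(\LL) = h \geq i$ and count the effective divisors $D \in \Eff^d C$ with $[D] = \LL$: this is exactly the number of $\Fq$-rational points of the projective space $\Projective H^0(C,\LL)$, namely $(q^h - 1)/(q - 1)$. Thus each such $\LL$ contributes $\bigl[(q^h-1)/(q-1)\bigr]^2$ pairs $(D,E)$ to $X_d$, all with $D \sim E$. Summing over all $\LL$ in the Brill-Noether locus, and using that distinct $\LL$ give disjoint sets of divisor pairs, I obtain
\begin{equation}
\label{equation2.8a}
\abs{\{\LL \mid h^0(\LL) \geq i\}} \cdot \left(\frac{q^i - 1}{q - 1}\right)^2 \leq \abs{X_d} \leq 16^g q^d,
\end{equation}
where I have bounded each summand below using $h \geq i$ and the fact that $(q^h-1)/(q-1)$ is increasing in $h$.

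It then remains to rearrange~\eqref{equation2.8a} into the desired form. Dividing through gives a bound of $16^g q^d \bigl[(q-1)/(q^i-1)\bigr]^2$, and I need to check that $q^d \bigl[(q-1)/(q^i-1)\bigr]^2 \leq q^{d-2i+2}$, i.e.\ that $\bigl[(q-1)/(q^i-1)\bigr]^2 \leq q^{-2i+2} = q^{2-2i}$. Taking square roots, this amounts to $(q-1)/(q^i-1) \leq q^{1-i}$, equivalently $q^{i-1}(q-1) \leq q^i - 1$, i.e.\ $q^i - q^{i-1} \leq q^i - 1$, which holds since $q^{i-1} \geq 1$. This elementary inequality closes the argument.

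The main obstacle, such as it is, is the bookkeeping around the involution reduction and making sure the lower bound on the number of effective divisors in each linear system is applied correctly; the core counting step is a clean application of Theorem~\ref{theorem2.6}. One subtlety to verify is that in case~(i) the relevant linear systems genuinely lie in $\Eff^d C$ with the correct degree $d$ so that the pairs land in $X_d$, and that the final inequality $(q-1)/(q^i-1) \leq q^{1-i}$ is used at the right point; these are routine but should be stated explicitly.
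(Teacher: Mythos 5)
Your proposal is correct and follows essentially the same route as the paper: reduce case~(ii) to case~(i) via the involution $\LL \mapsto \KK \tensor \LL^{-1}$, then count the fibres of the map $X_d \to \Pic^d C(\Fq)$ over line bundles with $h^0 \geq i$ and invoke Theorem~\ref{theorem2.6}. The only cosmetic difference is that you carry the exact fibre size $\bigl[(q^h-1)/(q-1)\bigr]^2$ and verify $(q-1)/(q^i-1) \leq q^{1-i}$ explicitly, whereas the paper directly uses the lower bound $q^{2i-2}$ for the fibre $\Projective^{\ell-1}(\Fq) \times \Projective^{\ell-1}(\Fq)$.
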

\begin{proof}
As we discussed just before~\eqref{equation2.0}, there is an
involution on the line bundles on $C$ exchanging $\LL$ with
$\KK \tensor \LL^{-1}$, and this involution exchanges conditions (i)
and~(ii) without changing the value of $d - 2i + 2$.  Thus we may
assume in the proof that condition~(i) holds, which allows us to use
Theorem~\ref{theorem2.6}.  Under condition~(i), consider the map
$f:X_d \to \Pic^d C(\Fq)$, given by $f(D,E) = [D] = [E]$.  The fibre
of $f$ over a point $\LL$ with $h^0(\LL) = \ell > 0$ is isomorphic to
$\Projective^{\ell-1}(\Fq)\times \Projective^{\ell-1}(\Fq)$. If $\ell
\geq i$, this fibre has at least $q^{2i-2}$ points, so, combining with
our estimate~\eqref{equation2.6}, we obtain the result.
\end{proof}

\begin{remark}
\label{remark2.9}
If $i=1$ above, it is better to bound the cardinality of the
Brill-Noether locus by $N_d$, since every $\LL$ with $h^0(\LL) \geq 1$
is represented by at least one effective divisor.
For $q \geq 5$, this gives a bound of $(5/4)(1+1/\sqrt{5})^{2g} \cdot
q^d < (5/4)(2.1)^g \cdot q^d$.  This is much better than $16^g q^d$
when $g \geq 1$.
\end{remark}

\begin{remark}
\label{remark2.9.5}
Theorem~\ref{theorem2.8} implies both Clifford's and Martens' theorems
over a finite field.  Namely, replacing $\F_q$ with $\F_{q^a}$ and
letting $a \to \infty$, we obtain that the dimension of the
Brill-Noether locus~\eqref{equation1.1} is bounded above by $d-2i+2$,
recovering Martens' theorem, and hence is empty whenever 
$d-2i+2 < 0$, recovering Clifford's theorem and the middle inequality
in~\eqref{equation1.2}.  In fact, \eqref{equation1.2} is equivalent to
combining $d-2i+2 \geq 0$ with [condition (i) or condition (ii)] above.
\end{remark}

For the next result, note that all line bundles of degree $g+1$ are
base point free if $g \leq 1$, so we have assumed that $g \geq 2$.
Also note that the requirement $q \geq 16^g$ can be weakened, but
we are anyhow only interested if our bound on the final
probability is less than~$1$.

\begin{theorem}
\label{theorem2.10}
Assume $g \geq 2$ and $q \geq 16^g$.  Then the probability that a
uniformly randomly chosen element of $\Pic^{g+1} C(\Fq)$ is not base point
free is at most $(16^g \cdot g)/q$.
\end{theorem}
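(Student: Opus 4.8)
The plan is to translate geometric base-point-freeness into a Brill--Noether count of exactly the type controlled by Theorem~\ref{theorem2.8}, and then to run a union bound over the possible base divisors; the extra factor $g$ in the statement will emerge from summing over the degrees of those divisors.

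First I would set up the dictionary. Since $\deg\LL = g+1$, Riemann--Roch gives $h^0(\LL) = h^1(\LL) + 2 \geq 2$, so every such $\LL$ carries at least a pencil. Now $\LL$ fails to be base point free precisely when its base locus (the common zero scheme of all sections) is non-empty; this base locus is Galois-stable, hence an $\Fq$-rational effective divisor $B \neq 0$, and $\LL(-B)$ is base point free with $h^0(\LL(-B)) = h^0(\LL) \geq 2$. On a curve of genus $g \geq 2$ a line bundle with $h^0 \geq 2$ must have degree $\geq 2$ (degree $1$ would force $g=0$), so $\deg(\LL(-B)) = g+1-\deg B \geq 2$, i.e. $\deg B \leq g-1$. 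In particular $B$ contains a prime (irreducible) $\Fq$-rational divisor $\mathfrak p$ with $e := \deg\mathfrak p \leq g-1$, and since every section of $\LL$ vanishes along $\mathfrak p$, the twist $\mathcal M := \LL(-\mathfrak p) \in \Pic^{g+1-e}C(\Fq)$ satisfies $h^0(\mathcal M) = h^0(\LL) \geq 2$.

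Second, I would fix a prime divisor $\mathfrak p$ of degree $e$ and bound the bundles it obstructs. The twist $\LL \mapsto \LL(-\mathfrak p)$ is a bijection $\Pic^{g+1}C(\Fq) \to \Pic^{g+1-e}C(\Fq)$, so the set of $\LL$ having $\mathfrak p$ in their base locus injects into $\{\mathcal M \in \Pic^{g+1-e}C(\Fq) \mid h^0(\mathcal M) \geq 2\}$. The pair $(d,i)=(g+1-e,2)$ meets the hypotheses of Theorem~\ref{theorem2.8} --- case (ii) when $e=1$ and case (i) when $2 \leq e \leq g-1$ --- and in both cases $d-2i+2 = g-1-e$, so this locus has at most $16^g q^{g-1-e}$ points. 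Bounding the number of prime divisors of degree $e$ by Corollary~\ref{corollary2.4}(3) and summing over $e$ (every non-bpf $\LL$ being counted at least once), I obtain
\[
\abs{\{\LL \in \Pic^{g+1}C(\Fq) \mid \LL \text{ not bpf}\}}
\;\leq\; \sum_{e=1}^{g-1} N_e^{irr}\,16^g q^{g-1-e}
\;\leq\; 16^g q^{g-1} \sum_{e=1}^{g-1} \frac{q^e + 2gq^{e/2} + 1}{e\,q^{e}}.
\]
The dominant part of the inner sum is $\sum_{e=1}^{g-1} 1/e = H_{g-1} < g$, while the remaining terms carry a factor $q^{-e/2} \leq q^{-1/2} \leq 4^{-g}$ that is negligible once $q \geq 16^g$; thus the numerator is at most $16^g g\,q^{g-1}$ up to a factor arbitrarily close to $1$. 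Dividing by $\abs{J} = \abs{\Pic^{g+1}C(\Fq)} \geq (q^{1/2}-1)^{2g} = q^g(1-q^{-1/2})^{2g}$ from~\eqref{equation2.3} then produces a bound of shape $(16^g g/q)\cdot(1+o(1))/(1-q^{-1/2})^{2g}$.

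The main obstacle is precisely this last bookkeeping: the factor $(1-q^{-1/2})^{2g}$ in the denominator is strictly below $1$ and must be absorbed by the slack in the crude estimate $H_{g-1} < g$. I would control it via the monotonicity in Lemma~\ref{lemma2.0}(2) (applied with $q^{-1/2} \leq 2^{-2g}$), which keeps $(1-q^{-1/2})^{2g}$ bounded below by an explicit constant tending to $1$ as $g$ grows, and combine this with $H_{g-1} \leq g-1$ and the hypothesis $q \geq 16^g$ to verify the product stays $\leq 16^g g/q$. The tightest case is small $g$ --- for $g=2$ the sum has the single term $e=1$ with $H_{g-1}=1$, leaving a comfortable factor-of-two margin against the denominator loss --- while for large $g$ the genuine growth $H_{g-1} \sim \log g \ll g$ gives ample room, so a direct check at small $g$ together with the asymptotics closes the argument.
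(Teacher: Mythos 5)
Your proposal is correct and follows essentially the same route as the paper's proof: decompose the failure of base point freeness via an irreducible divisor $\mathfrak p$ in the base locus, twist down to $\Pic^{g+1-e}C(\Fq)$ and apply Theorem~\ref{theorem2.8} with $i=2$, count primes via Corollary~\ref{corollary2.4}(3), sum the resulting harmonic-type series, and absorb the $(1-q^{-1/2})^{-2g}$ loss from the Weil lower bound on $\abs{J}$ using Lemma~\ref{lemma2.0}(2), exactly as in~\eqref{equation2.9}--\eqref{equation2.11}. Your verification that $e=1$ (i.e.\ $d=g$, $i=2$) falls under case~(ii) of Theorem~\ref{theorem2.8} is a point the paper glosses over, and your closing numeric bookkeeping, though only sketched, identifies precisely the estimates ($H_{g-1}$ versus $0.7g$ and $(1-4^{-g})^{2g}\geq(1-1/16)^4$) that the paper uses to close the gap.
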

\begin{proof}
The number of elements of $\Pic^{g+1} C(\Fq)$ is $\abs{J}$, which we can
bound by~\eqref{equation2.3}.  The point is to count the number of
line bundles $\LL \in \Pic^{g+1} C(\Fq)$ that are not base point free.  All
line bundles $\LL  \in \Pic^{g+1} C(\Fq)$ have $h^0(\LL) \geq 2$, but if
such a line bundle is not base point free, then there exists an
irreducible divisor $E$ (part of the divisor of base points of
$\LL$) for which $h^0(\LL(-E)) = h^0(\LL) \geq 2$.  Such an $E$ must
necessarily have $\deg E \leq g-1$, since otherwise we would have $\deg
\LL(-E) \leq 1$, which would force $h^0(\LL(-E)) \leq 1$.
Writing $\LL' = \LL(-E)$
(equivalently, $\LL = \LL'(E)$)
and $e = \deg E$, we can thus bound the number of
$\LL \in \Pic^{g+1} C(\Fq)$ that are not base point free by the number of
pairs $(E,\LL')$, where $E$ is irreducible, $\deg \LL' = g+1-e$, and
$h^0(\LL') \geq 2$, as $e$ ranges over all of $\{1,2,\dots,g-1\}$.  For a
given $e$, there are $N^{irr}_e$ choices of $E$. 
Also, there are by~\eqref{equation2.8} at most $16^g q^{g-1-e}$ choices
of $\LL'$.
Hence the total
number of $\LL$ that are not base point free is at most
\begin{equation}
\label{equation2.9}
\sum_{e=2}^{g-1} N_e^{irr}
\cdot 16^g q^{g-1-e}.
\end{equation}
By 
part~(3) of
Corollary~\ref{corollary2.4}, we can bound~\eqref{equation2.9} above
by 
\begin{equation}
\label{equation2.10}
\begin{split}
 & \sum_{e=1}^{g-1} \frac{1}{e}(q^e + 2g q^{e/2} + 1) 16^g q^{g-1-e}\\
 < \> & q^{g-1} \cdot 16^g 
   \left[ \sum_{e=1}^{g-1} \frac{1}{e}
           + 2g \sum_{e=1}^\infty \frac{q^{-e/2}}{e}
           + \sum_{e=1}^\infty \frac{q^{-e}}{e}
   \right].
\end{split}
\end{equation}
Estimating the harmonic sum, and using that $q \geq 16^g \geq 256$, we
see that the factor in square brackets is bounded by 
$1 + \log(g-1) - 2g\log(1-1/16) - \log(1-1/256) \approx 1 +
\log(g-1) + 0.13g + 0.004$, which is in turn at most $0.7g$ (recall
that this is for integers $g \geq 2$).
Thus our final probability is at most
\begin{equation}
\label{equation2.11}
\frac{q^{g-1}\cdot 16^g \cdot 0.7g}{\abs{J}} 
\leq \frac{16^g \cdot 0.7 g}{q (1-q^{-1/2})^{2g}}
\leq \frac{16^g \cdot 0.7 g}{q (1-4^{-g})^{2g}}
\leq \frac{16^g \cdot g}{q}.
\end{equation}
The above uses the fact that for $g \geq 2$, we have
$(1-4^{-g})^{2g} \geq (1 - 4^{-2})^4 \approx 0.77$, by part~(2)
of Lemma~\ref{lemma2.0}.
\end{proof}

\begin{remark}
\label{remark2.11}
If $g=2$ above, a much better bound is possible.  Indeed, in that case
the only possible value of $e$ is $e=1$, and in that case the only
possible $\LL'$ is $\LL' = \KK$, since one can see that $\KK \tensor
(\LL')^{-1}$ must have degree~0 and $h^0 \geq 1$.  Thus the bound for
the number of $\LL$ is
just $N_1^{irr}$, the number of points on $C$, which is bounded by
$q + 4 q^{1/2} + 1$; we can divide this by $\abs{J}$ to get our final
bound.  A further improvement can be obtained by keeping the
eigenvalues $\alpha_1, \dots, \alpha_4$ for Frobenius in both
$N_1^{irr}$ and $\abs{J}$, and optimizing the ratio
$N_1^{irr}/\abs{J}$ over the possible $\alpha_i$ satisfying the Weil
bounds and the Poincar\'e duality relations $\alpha_1 \alpha_2 =
\alpha_3 \alpha_4 = q$.
\end{remark}

\begin{remark}
\label{remark2.12}
If we are a bit more careful with our bound for $\binom{2g-2}{d}$ that
goes into part~(1) of Corollary~\ref{corollary2.4}, we can probably remove
the factor of $g$, but this does not seem worth the effort here.
\end{remark}

We conclude this section with a precise bound on how likely it is that
a line bundle of degree $g-1$ has any global sections, under the same
hypotheses as Theorem~\ref{theorem2.10}.

\begin{proposition}
\label{proposition2.13}
Assume $g \geq 2$ and $q \geq 16^g$.  Then the probability that a
uniformly randomly chosen element $\LL \in \Pic^{g-1} C(\Fq)$ has $h^0(\LL)
\geq 1$ is at most $1.7/q$.
\end{proposition}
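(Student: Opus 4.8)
The plan is to bound the number of ``effective'' classes directly by the number of effective divisors, which is the situation the introduction flags as handled ``more easily'' than the base-point-free case. A line bundle $\LL \in \Pic^{g-1} C(\Fq)$ satisfies $h^0(\LL) \geq 1$ if and only if it lies in the image of the map $\Eff^{g-1} C \to \Pic^{g-1} C(\Fq)$ sending $D$ to $[D]$; since every effective class is represented by at least one effective divisor, the number of such $\LL$ is at most $N_{g-1} = \abs{\Eff^{g-1} C}$. No appeal to Theorem~\ref{theorem2.8} is needed here.

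Next I would bound the ratio $N_{g-1}/\abs{J}$, which is exactly the desired probability. By part~(2) of Corollary~\ref{corollary2.4} we have $N_{g-1} \leq (\frac{q}{q-1})(1+q^{-1/2})^{2g} q^{g-1}$, and by~\eqref{equation2.3} we have $\abs{J} \geq (q^{1/2}-1)^{2g} = q^g (1-q^{-1/2})^{2g}$. Dividing, the powers $q^{g-1}$ and $q^g$ combine into a single factor $1/q$, giving
\[
\frac{N_{g-1}}{\abs{J}} \leq \frac{1}{q-1}\left(\frac{1+q^{-1/2}}{1-q^{-1/2}}\right)^{2g}.
\]

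It then remains to show the right-hand side is at most $1.7/q$, which is where the one genuine estimate lies. First I would write $\frac{1}{q-1} = \frac{1}{q}\cdot\frac{1}{1-q^{-1}}$ and use $q \geq 16^g \geq 256$ to bound the second factor by $256/255$. For the $g$-dependent factor, since $t \mapsto (1+t)/(1-t)$ is increasing and $q^{-1/2} \leq 4^{-g} = 2^{-2g}$, I would replace $q^{-1/2}$ by $2^{-2g}$ to get the upper bound $\bigl[(1+2^{-2g})/(1-2^{-2g})\bigr]^{2g}$. Setting $x = 2g$, this is precisely the function in part~(3) of Lemma~\ref{lemma2.0}, which is decreasing for $x > 1/\log 2$; hence for $g \geq 2$ it attains its maximum at $g = 2$, namely $(17/15)^4 \approx 1.65$. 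The only real task is the bookkeeping needed to confirm that the worst case is indeed $q = 16^g$ and $g = 2$ (both follow from monotonicity: the probability decreases as $q$ grows, while Lemma~\ref{lemma2.0}(3) controls the growth in $g$) and to check that the product $\frac{256}{255}\cdot 1.65 \approx 1.656$ stays below $1.7$, completing the proof.
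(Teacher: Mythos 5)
Your proposal is correct and essentially identical to the paper's own proof: the paper likewise bounds the probability by $N_{g-1}/\abs{J}$ (via Remark~\ref{remark2.9}), applies part~(2) of Corollary~\ref{corollary2.4} and the lower bound in~\eqref{equation2.3}, substitutes $q^{-1/2} \leq 4^{-g}$, and invokes part~(3) of Lemma~\ref{lemma2.0} to reduce to the $g=2$ value $\approx 1.65$, with the same $256/255$ factor. Your only departures are cosmetic (writing $\frac{1}{q-1} = \frac{1}{q}\cdot\frac{1}{1-q^{-1}}$ rather than $\frac{1}{q}\cdot\frac{q}{q-1}$, and making the monotonicity-in-$q$ step explicit), so there is nothing to add.
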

\begin{proof}
Using Remark~\ref{remark2.9} and our various estimates above, we see
that this probability is bounded above by
\begin{equation}
\label{equation2.12}
\frac{N_{g-1}}{\abs{J}}
\leq
\frac{(\frac{q}{q-1})(1 + q^{-1/2})^{2g} q^{g-1}}{(q^{1/2} - 1)^{2g}}
=
\frac{1}{q} \cdot
\frac{q}{q-1} \cdot \left[\frac{1+q^{-1/2}}{1-q^{-1/2}}\right]^{2g}.
\end{equation}
The fraction $q/(q-1)$ is at most $256/255 \approx 1.004$, while the
quantity in square brackets is bounded above by
$[(1+4^{-g})/(1-4^{-g})]^{2g}$, which, for $g \geq 2$, does not exceed
its value at $g=2$, by part~(3) of Lemma~\ref{lemma2.0}.  Now the
value at $g=2$ is approximately $1.65$, so the constant in our upper
bound is approximately $(1.004)(1.65)$, which is less than $1.7$.
\end{proof}

\section{Application to typical divisors}
\label{section3}

We apply our results from Section~\ref{section2} to bound the number
of typical divisors on a curve, in the sense of~\cite{KKMtypical}.  We
begin by recalling the definition of a typical divisor from that
article.  

\begin{definition}
\label{definition3.1}
We first set up the context in which we work.  From now on, $C$ comes
equipped with a distinguished rational point $\Pinf \in C(\Fq)$, and
the definition of a typical divisor on $C$ depends on the choice of
$\Pinf$.  We use the point $\Pinf$ (viewed as a divisor $D_0 = \Pinf$
of degree~$1$) to identify $\Pic^d C$ with $\Pic^0 C$ whenever
convenient, as we did just after Definition~\ref{definition2.1}.
\begin{enumerate}
\item
A divisor $D$ on $C$ is called good if $D$ is effective,
$\Fq$-rational, and disjoint from $\Pinf$.  We will assume that
the degree $d = \deg D$ satisfies $d \geq g$, and will also refer to
the corresponding line bundle $\LL = \OC(d\Pinf - D) \in \Pic^0 C(\Fq)$.
\item
We define the following $\Fq$-vector spaces that will appear
frequently in this section: for $N \in \Z$,
\begin{equation}
\label{equation3.1}
\begin{split}
W^N &= H^0(C,\OC(N\Pinf)),\\ 
W^N_D &= H^0(C,\OC(N\Pinf - D)) = H^0(C,\LL((N-d)\Pinf)).\\
\end{split}
\end{equation}
We will usually view $W^N$ and $W^N_D$ as Riemann-Roch spaces, hence as
subsets of the function field $\Fq(C)$; for example,
$W^N_D \subset W^{N+1}_D$. 
\item
A good divisor $D$ is called semi-typical if $W^{d+g-1}_D = 0$, or
equivalently if $H^0(C,\LL((g-1)\Pinf)) = 0$.
\item
A good divisor $D$ is called typical if there
exist $s \in W^{d+g}_D = H^0(C,\LL(g\Pinf))$ and
$t \in W^{d+g+1}_D = H^0(C,\LL((g+1)\Pinf))$, satisfying
\begin{equation}
\label{equation3.2}
s W^{2g} + t W^{2g-1} + W^{d+g-1} = W^{d+3g}.
\end{equation}
\end{enumerate}
\end{definition}

The above definition of semi-typicality depends only on the
corresponding line bundle $\LL$ (via its twist $\LL((g-1)\Pinf)$, and
not on the particular good divisor $D$, nor on its degree $d$.
In~\cite{KKMtypical}, we also proved that typicality of a divisor also
depends only on $\LL$.  The following is an equivalent
characterization of typicality in terms of $\LL$.

\begin{proposition}
\label{proposition3.2}
Let $D$ be a good divisor of degree $d \geq g$, with $g \geq 1$.  Then
$D$ is typical if and only if the following conditions hold for the
associated line bundle $\LL = \OC(d\Pinf - D)$:
\begin{enumerate}
\item
The line bundles $\LL$ and $\LL^{-1}$ are semi-typical, i.e.,
$h^0(\LL((g-1)\Pinf)) = h^0(\LL^{-1}((g-1)\Pinf)) = 0$.
\item
The line bundle $\LL((g+1)\Pinf)$ is base point free.
\end{enumerate}
\end{proposition}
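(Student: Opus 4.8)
The plan is to translate the typicality equation~\eqref{equation3.2} entirely into Riemann--Roch spaces of twists of $\LL$, and then to recognize the two product spaces $sW^{2g}$ and $tW^{2g-1}$ as the full spaces of sections of $\LL(3g\Pinf)$ vanishing on the zero divisors of $s$ and of $t$. First I would record the dimensions: since $D$ is good of degree $d\geq g$, every $W^N$ with $N\geq 2g-1$ has $\dim W^N=N+1-g$, so for $s,t\neq 0$ the three summands have dimensions $\dim(sW^{2g})=g+1$, $\dim(tW^{2g-1})=g$ and $\dim W^{d+g-1}=d$, adding up to exactly $\dim W^{d+3g}=d+2g+1$; thus~\eqref{equation3.2} asks for a direct sum decomposition. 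Applying the evaluation map $W^{d+3g}\to\OC/\OC(-D)$, whose kernel is $W^{d+3g}_D$ and whose image has dimension $d$, and noting $sW^{2g}+tW^{2g-1}\subseteq W^{d+3g}_D$, one sees that~\eqref{equation3.2} forces $W^{d+g-1}\to\OC/\OC(-D)$ to be onto, hence (both sides having dimension $d$) injective, i.e.\ $W^{d+g-1}_D=H^0(\LL((g-1)\Pinf))=0$: this is semi-typicality of $\LL$. Conversely, once $\LL$ is semi-typical one gets $W^{d+3g}=W^{d+3g}_D\oplus W^{d+g-1}$, and a short modularity argument reduces~\eqref{equation3.2} to the single condition $sW^{2g}+tW^{2g-1}=W^{d+3g}_D=H^0(\LL(3g\Pinf))$. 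This already extracts the $\LL$ part of condition~(1).

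The heart of the argument is the identification of the two summands. Writing $Z_s$ and $Z_t$ for the divisors of zeros of $s\in H^0(\LL(g\Pinf))$ and $t\in H^0(\LL((g+1)\Pinf))$ (effective of degrees $g$ and $g+1$), every product $sf$ with $f\in W^{2g}$ vanishes on $Z_s$, so $sW^{2g}\subseteq H^0(\LL(3g\Pinf)(-Z_s))$; since this target has degree $2g$ and hence $h^0=g+1=\dim sW^{2g}$, the inclusion is an equality, and likewise $tW^{2g-1}=H^0(\LL(3g\Pinf)(-Z_t))$. Hence typicality becomes the statement that sections vanishing on $Z_s$, together with sections vanishing on $Z_t$, span all of $H^0(L)$ with $L=\LL(3g\Pinf)$. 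From the left-exact sequence $0\to H^0(L(-Z_s-Z_t))\to H^0(L(-Z_s))\oplus H^0(L(-Z_t))\to H^0(L)$, valid when $Z_s$ and $Z_t$ are disjoint, this surjectivity is equivalent to $Z_s,Z_t$ having no common point together with $H^0(L(-Z_s-Z_t))=0$. The key computation is $L(-Z_s-Z_t)\isomorphic\LL^{-1}((g-1)\Pinf)$, which follows from $\OC(Z_s)=\LL(g\Pinf)$ and $\OC(Z_t)=\LL((g+1)\Pinf)$; thus the intersection vanishes exactly when $\LL^{-1}$ is semi-typical, giving the $\LL^{-1}$ part of condition~(1).

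It remains to connect the ``no common point'' requirement to condition~(2). Under semi-typicality of $\LL$ the section $s$ is unique up to scalar, and when viewed in $M=\LL((g+1)\Pinf)$ it has zero divisor $Z_s+\Pinf$. Since $\Pinf$ is never a base point of $M$ (because $h^0(M(-\Pinf))=h^0(\LL(g\Pinf))=1<2=h^0(M)$) while any base point away from $\Pinf$ would be a common zero of $s$ and $t$, the existence of $s,t$ with disjoint zero divisors forces $M$ to be base point free, giving~(2). For the converse I would use base point freeness of $M$ to choose a section $t$ whose zero divisor avoids the finite set $\supp(Z_s)\cup\{\Pinf\}$; then $Z_s$ and $Z_t$ are disjoint and, with $\LL^{-1}$ semi-typical, the second paragraph yields~\eqref{equation3.2}.

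The main obstacle I anticipate is precisely this last existence step over a finite field: one must produce an $\Fq$-rational $t\in H^0(M)$ missing finitely many prescribed points, and this is where base point freeness has to be used quantitatively, since the sections vanishing at a given point form a hyperplane in $H^0(M)$ and one needs the union of these finitely many hyperplanes not to exhaust $H^0(M)$. Everything else is Riemann--Roch bookkeeping; conceptually, the role of $h^0(\LL^{-1}((g-1)\Pinf))=0$ is exactly to guarantee that $H^0(L(-Z_s))$ and $H^0(L(-Z_t))$ meet in zero, so that their dimensions $g+1$ and $g$ add up to fill $H^0(L)$.
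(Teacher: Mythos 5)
Your overall route is essentially the paper's own proof in different packaging: your $Z_s$ and $Z_t$ are the paper's divisors $A$ and $B$ in~\eqref{equation3.3}, your identifications $sW^{2g}=H^0(L(-Z_s))$ and $tW^{2g-1}=H^0(L(-Z_t))$ with $L=\LL(3g\Pinf)$ are the paper's $sW^{2g}=W^{d+3g}_{D+A}$ and $tW^{2g-1}=W^{d+3g}_{D+B}$, and your key computation $L(-Z_s-Z_t)\isomorphic\LL^{-1}((g-1)\Pinf)$ is exactly the paper's division-by-$st$ step identifying $W^{d+3g}_{D+A+B}$ with $H^0(C,\LL^{-1}((g-1)\Pinf))$. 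The dimension bookkeeping in your first paragraph, and the deduction of semi-typicality of $\LL$ from the directness of the sum, match the paper's~\eqref{equation3.4}. The forward direction of your argument is complete.

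The step you yourself flag as the main obstacle, however, is a genuine gap as written, and the repair you propose would not work in general. You want an $\Fq$-rational $t\in H^0(M)$, $M=\LL((g+1)\Pinf)$, whose zero divisor avoids $\supp(Z_s)\cup\{\Pinf\}$, and you suggest showing that the union of the corresponding "bad" hyperplanes does not exhaust $H^0(M)$ by counting. But $h^0(M)=2$, so $H^0(M)$ has only $q+1$ lines over $\Fq$, while $\supp(Z_s)\cup\{\Pinf\}$ can contain up to $g+1$ closed points; once $g>q$ (and the proposition assumes nothing about $q$) a counting argument cannot conclude. The repair is structural, not quantitative, and it is what the paper does implicitly: under semi-typicality of $\LL$, the section $s$ vanishes, as a section of $M$, at \emph{every} point of $\supp(Z_s)\cup\{\Pinf\}$, so for each such closed point $P$ the subspace $H^0(M(-P))$ contains $s$; since $M$ is base point free, $H^0(M(-P))\neq H^0(M)$, whence $H^0(M(-P))=\Fq s$. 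All the bad hyperplanes are therefore the single line $\Fq s$, and \emph{any} $t\in W^{d+g+1}_D - W^{d+g}_D$ (nonempty for every $q$, since the dimensions are $2>1$) automatically has $Z_t$ disjoint from $Z_s+\Pinf$: a common zero of the basis $\{s,t\}$ of $H^0(M)$ would be a base point of $M$. With that one-line substitution your converse closes, and the whole argument coincides with the paper's proof.
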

\begin{proof}
The fact that the above conditions are equivalent to typicality is
implicit in~\cite{KKMtypical}, but in the interest of making this
article self-contained we will include a streamlined version of the
argument.

We first show that the conditions above imply that $\LL$ is typical.
We know from semi-typicality that $W^{d+g-1}_D = 0$.
To find our desired $s,t \in W^{d+g+1}_D$, we observe that 
$\dim W^{d+g+1}_D \geq 2$ by Riemann-Roch, but that in the inclusions
$0 = W^{d+g-1}_D \subset W^{d+g}_D \subset W^{d+g+1}_D$, the
dimensions increase by at most~$1$.  Thus $\dim W^{d+g}_D = 1$ and
$\dim W^{d+g+1}_D = 2$, and we choose accordingly any generator
$s \in W^{d+g}_D - \{0\}$, and any $t \in W^{d+g+1}_D - W^{d+g}_D$,
from which it follows that $\{s,t\}$ is a basis for
$W^{d+g+1}_D = H^0(C,\LL((g+1)\Pinf))$.
Viewing $s$ and $t$ as elements of the function field $\Fq(C)$, this
means moreover that $s$ and $t$ have poles at $\Pinf$ of exact orders
$d+g$ and $d+g+1$, respectively.  Hence their divisors have the form
\begin{equation}
\label{equation3.3}
\Divisor s = -(d+g)\Pinf + D + A,\qquad
\Divisor t = -(d+g+1)\Pinf + D + B,
\end{equation}
where $A$ and $B$ are good divisors of degrees $g$ and $g+1$,
respectively.  Moreover, the fact that $\LL((g+1)\Pinf) =
\OC((d+g+1)\Pinf - D)$ is base point free means that $A$ and $B$ are
disjoint.
Since $A$ and $B$ are disjoint, it follows that 
$sW^{2g} \intersect tW^{2g-1}
 = W^{d+3g}_{D+A} \intersect W^{d+3g}_{D+B}
 = W^{d+3g}_{D+A+B}$.
This last space is isomorphic, via division by $st$, to
$H^0(C,\OC((-d+g-1)\Pinf+D) = H^0(C,\LL^{-1}((g-1)\Pinf) = 0$, by
semi-typicality of $\LL^{-1}$.  Hence
$\dim(sW^{2g} + tW^{2g-1}) = (g+1) + g = \dim W^{d+3g}_D$.  On the
other hand,
$sW^{2g} + tW^{2g-1}  = W^{d+3g}_{D+A} + W^{d+3g}_{D+B} \subset W^{d+3g}_D$,
and we deduce that $sW^{2g} + tW^{2g-1} = W^{d+3g}_D$.  This implies that
$(sW^{2g} + tW^{2g-1}) \intersect W^{d+g-1} = W^{d+g-1}_D = 0$ (since
$\LL$ is semi-typical), hence that
$\dim (sW^{2g} + tW^{2g-1} + W^{d+g-1})
= \dim W^{d+3g}_D + \dim W^{d+g-1} = (2g+1) + d = \dim W^{d+3g}$, and
we have proved~\eqref{equation3.2}.  Hence $\LL$ is typical, as desired.

We now prove the converse, namely that if $D$ is typical, then $\LL$
satisfies the above conditions.  This is fairly close to running the
above argument backwards, but needs some details to be filled in.
From~\eqref{equation3.2}, we deduce by counting dimensions that the
sum is direct (and, even before that, that $s$ and $t$ are nonzero).
Hence
\begin{equation}
\label{equation3.4}
\begin{split}
sW^{2g} \intersect tW^{2g-1} &= 0,\\
\dim (sW^{2g} + tW^{2g-1}) &= 2g+1 = \dim W^{d+3g}_D, \\
(sW^{2g} + tW^{2g-1}) \intersect W^{d+g-1} & = 0.\\
\end{split}
\end{equation}
Since we anyhow have $sW^{2g} + tW^{2g-1} \subset W^{d+3g}_D$, we
deduce that these spaces are equal; from the last intersection above,
it follows that $W^{d+g-1}_D = 0$, so $D$ is semi-typical.  Moreover,
from $sW^{2g} + tW^{2g-1} = W^{d+3g}_D$, it follows that $s$ and
$t$, when viewed as sections of
$\LL((g+1)\Pinf) = \OC((d+g+1)\Pinf - D)$, cannot have 
any common vanishing at an irreducible effective divisor $E$ (which
could be $\Pinf$), since we would then have 
$sW^{2g} + tW^{2g-1} \subset W^{d+3g}_{D+E} \subsetneq W^{d+3g}_D$.
This shows that $\LL((g+1)\Pinf)$ is base point free, and that the
divisors of $s$ and 
$t$ are as in~\eqref{equation3.3}; this uses the fact that $D$ is
semi-typical for the precise behavior at $\Pinf$.  Finally, the fact
that $sW^{2g} \intersect tW^{2g-1} = 0$ implies that $\LL^{-1}$ is
semi-typical, because the existence of a nonzero 
$u \in H^0(C,\OC((-d+g-1)\Pinf + D)$ would give rise to a nonzero
$ust = s(ut) = t(us) \in sW^{2g} \intersect tW^{2g+1}$.
\end{proof}

The above result allows us to view typicality and semi-typicality as
properties of a line bundle $\LL \in \Pic^0 C(\Fq)$.  In this context,
we immediately obtain a bound on how likely it is that a uniformly
randomly chosen line bundle $\LL$ is not typical or semi-typical.  If
$g=1$, then all nontrivial line bundles $\LL \not\isomorphic \OC$ are
easily seen to be typical.  We therefore limit ourselves to $g \geq 2$
and moderately large $q$, and obtain our desired bound.

\begin{theorem}
\label{theorem3.3}
Assume $g \geq 2$ and $q \geq 16^g$.  Then the probability that a
uniformly randomly chosen element $\LL \in \Pic^0 C(\Fq)$ is not
typical is at most $(16^g \cdot g + 3.4)/q$.  The probability that
$\LL$ is not semi-typical is at most $1.7/q$.
\end{theorem}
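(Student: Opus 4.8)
The plan is to derive both statements directly from the characterizations and probability bounds already established, reducing everything to Theorem~\ref{theorem2.10} and Proposition~\ref{proposition2.13} via bijections of $\Pic C(\Fq)$ that transport the uniform distribution. No new analytic work is needed; the content is entirely bookkeeping plus a union bound.

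For the semi-typicality claim, I would start from the fact that $\LL \in \Pic^0 C(\Fq)$ is semi-typical precisely when $h^0(\LL((g-1)\Pinf)) = 0$. The twisting map $\LL \mapsto \LL((g-1)\Pinf)$ is translation by the class of $(g-1)\Pinf$, hence a bijection from $\Pic^0 C(\Fq)$ onto $\Pic^{g-1} C(\Fq)$; since these finite sets have equal cardinality $\abs{J}$, it carries the uniform distribution to the uniform distribution. Therefore the probability that $\LL$ is not semi-typical equals the probability that a uniformly random element of $\Pic^{g-1} C(\Fq)$ has a nonzero global section, which is at most $1.7/q$ by Proposition~\ref{proposition2.13}. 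This establishes the second assertion immediately.

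For the typicality claim, I would invoke the characterization of Proposition~\ref{proposition3.2}: $\LL$ is typical if and only if both $\LL$ and $\LL^{-1}$ are semi-typical and $\LL((g+1)\Pinf)$ is base point free. Consequently $\LL$ fails to be typical only if at least one of three events occurs, and subadditivity of probability (a genuine union bound) reduces matters to estimating each event. The event that $\LL$ is not semi-typical has probability at most $1.7/q$ as above. Because inversion $\LL \mapsto \LL^{-1}$ is a group automorphism of $\Pic^0 C(\Fq)$, it preserves the uniform distribution, so the event that $\LL^{-1}$ is not semi-typical likewise has probability at most $1.7/q$. Finally, $\LL \mapsto \LL((g+1)\Pinf)$ is a uniformity-preserving bijection onto $\Pic^{g+1} C(\Fq)$, so the probability that $\LL((g+1)\Pinf)$ is not base point free equals that of a random element of $\Pic^{g+1} C(\Fq)$ failing to be base point free, namely at most $(16^g \cdot g)/q$ by Theorem~\ref{theorem2.10}. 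Summing the three contributions gives $(16^g \cdot g + 1.7 + 1.7)/q = (16^g \cdot g + 3.4)/q$, as claimed.

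The only step requiring any care—and it is the ``main obstacle'' only in the most nominal sense—is verifying that each of the three relevant maps (twisting by $(g-1)\Pinf$, inversion, and twisting by $(g+1)\Pinf$) is a bijection between finite sets of the same size and hence pushes the uniform distribution on $\Pic^0 C(\Fq)$ forward to the uniform distribution on the appropriate degree component. Granting this, the earlier probability bounds apply verbatim, and one must simply confirm that the three failure events are combined correctly via the ``only if'' direction of Proposition~\ref{proposition3.2}.
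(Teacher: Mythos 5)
Your proposal is correct and follows essentially the same route as the paper: both reduce typicality to the three conditions of Proposition~\ref{proposition3.2}, apply a union bound, and invoke Proposition~\ref{proposition2.13} (twice, via the uniformity-preserving maps $\LL \mapsto \LL^{\pm 1}((g-1)\Pinf)$) together with Theorem~\ref{theorem2.10} applied to $\LL((g+1)\Pinf)$. Your explicit verification that twisting and inversion transport the uniform distribution is exactly the observation the paper makes at the start of its proof, just spelled out in more detail.
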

\begin{proof}
Since $\LL \in \Pic^0 C(\Fq)$ is chosen uniformly at random, we see
that also $\LL(N\Pinf) \in \Pic^N C(\Fq)$ is chosen uniformly at
random for any $N$, as is $\LL^{-1}(N\Pinf)$.  The statement on
semi-typicality is now  
Proposition~\ref{proposition2.13}, applied to $\LL((g-1)\Pinf)$.  The 
statement on typicality follows from bounding the probability that at
least one of the conditions in Proposition~\ref{proposition3.2} fails,
using Proposition~\ref{proposition2.13} applied to $\LL((g-1)\Pinf)$
and $\LL^{-1}((g-1)\Pinf)$, and Theorem~\ref{theorem2.10} applied to
$\LL((g+1)\Pinf)$.
\end{proof}

As a small final observation, we note that the probability that at
least one of $\LL$ and $\LL^{-1}$ in the above theorem is not typical
is bounded by $(16^g \cdot 2g + 3.4)/q$, due to the probability that
$\LL^{-1}((g+1)\Pinf)$ is not base point free.





\providecommand{\bysame}{\leavevmode\hbox to3em{\hrulefill}\thinspace}
\providecommand{\MR}{\relax\ifhmode\unskip\space\fi MR }
\providecommand{\MRhref}[2]{%
  \href{http://www.ams.org/mathscinet-getitem?mr=#1}{#2}
}
\providecommand{\href}[2]{#2}

\end{document}